\documentclass[submission,,copyright,creativecommons]{eptcs}
\providecommand{\event}{AiML 2026} 

\usepackage{aiml26}
\usepackage{iftex}

\ifpdf
  \usepackage{underscore}         
  \usepackage[T1]{fontenc}        
\else
  \usepackage{breakurl}           
\fi

\usepackage{graphicx} 
\usepackage{amsmath}
\usepackage{amssymb,amsthm}
\usepackage{xcolor}
\usepackage{linguex}
\usepackage{multicol}
\usepackage{amsthm}

\newcommand{\threeSAT}{3\mathrm{SAT}}
\newcommand{\threeUNSAT}{3\mathrm{UNSAT}}
\newcommand{\M}{\mathcal{M}}

\newcommand{\ESO}{\mathrm{ESO}}

\newcommand{\inqBT}{\ensuremath{\mathsf{InqBT}}}
\newcommand{\inqBTe}{\ensuremath{\mathsf{InqBT}+[x]}}
\newcommand{\inqBQ}{\ensuremath{\mathsf{InqBQ}}}
\newcommand{\dep}[2]{=\hspace{-0.1cm}(#1,#2)}

\newcommand{\existsi}{\mathord{\exists\hspace{-.4em}\exists}} 

\newcommand{\lori}{\,\disji\,}
\def\disji{\rotatebox[origin=c]{-90}{$\!{\geqslant}$}}

\newcommand{\N}{\mathbb{N}}

\title{On the Expressive Power of Inquisitive Team Logic and Inquisitive First-Order Logic}
\author{Juha Kontinen
\institute{University of Helsinki\\ Helsinki, Finland}
\email{juha.kontinen@helsinki.fi}
\and
Ivano Ciardelli 
\institute{University of Padua\\
Padua, Italy}
\email{ivano.ciardelli@unipd.it}
}

\newcommand{\titlerunning}{On the Expressive Power of Inquisitive Team Logic and Inquisitive First-Order Logic}
\newcommand{\authorrunning}{J. Kontinen \& I. Ciardelli}

\hypersetup{
  bookmarksnumbered,
  pdftitle    = {\titlerunning},
  pdfauthor   = {\authorrunning},
  pdfsubject  = {EPTCS},               
  pdfkeywords = {keyword1, keyword2} 
}

\begin{document}
\maketitle

\begin{abstract}
Inquisitive team logic is a variant of inquisitive logic interpreted in team semantics, which has been argued to provide a natural setting for the regimentation of dependence claims. With respect to sentences, this logic is known to be expressively equivalent with first-order logic.
In this article we show that, on the contrary, the expressive power of open formulas in this logic properly exceeds that of  first-order logic. On the way to this result, we show that if inquisitive team logic is extended with the range-generating universal quantifier adopted in dependence logic, the resulting logic can express finiteness; as a consequence, this logic is not compact and has non-arithmetic complexity. We further extend our results to standard inquisitive first-order logic, showing that some sentences of this logic express non first-order properties of models, thus settling an open problem posed in \cite{Ciardelli:23book}.
\end{abstract}

\section{Introduction}

The last two decades have seen the rise of many logics based on team semantics \cite{Hodges:97}, a mathematical framework for studying concepts and phenomena that arise in contexts involving a plurality of data such as databases or probability distributions. Logics based on  team semantics have found applications in various  fields, including   database theory \cite{Kontinen:2013:independence,Hannula:2020:polyteam}, Bayesian networks and probabilistic dependencies \cite{Corander:2019,Hirvonen24}, quantum foundations \cite{Durand:2018:probabilistic,Abramsky:2021:team,AlbertG22}, as well as  formal semantics of natural language  \cite{Ciardelli:18book,hawke,aloni2022}.

In  team semantics,  formulas are interpreted over a single first-order structure together with \emph{a set of assignments} (aka a \emph{team}) rather than single assignment as in Tarskian semantics. A prominent example of a team-based logic is dependence logic, introduced in  \cite{Vaananen:2007:dependence}, which may be seen as an extension of standard first-order logic with dependence atoms $\dep{\vec{x}}{y}$, expressing the fact that the values of the variables $\vec{x}$ functionally determine the value of $y$. Inclusion logic \cite{Galliani:2012} and independence logic \cite{Gradel:2013} are two other extensively studied team-based logics that extend first-order logic with inclusion and independence, corresponding to  inclusion and embedded multivalued dependencies in  database theory  \cite{Engstrom12}.

Since the introduction of dependence logic, the expressivity and  complexity properties of logics in team semantics have been extensively studied (see, e.g., \cite{hannula2018,Durand:2022:tractability,DurandKV24}). In particular, the interesting discrepancy between sentences and open formulas of dependence logic  is now well understood \cite{KontinenV09}. With respect to sentences,  dependence logic is equi-expressive with existential second-order logic (ESO); however, for open formulas the correspondence is not as simple. In particular, formulas $\phi(x_1,\ldots,x_n)$ of dependence logic can be translated to  $\ESO$-sentences  $\psi(R)$ which can refer to a  team over $\{x_1,\ldots,x_n\}$ via an extra $n$-ary relation symbol $R$ occurring  \emph{only negatively} in   $\psi(R)$. The converse also holds, i.e., for any such $\ESO$-sentence  $\psi(R)$ there exists a  formula $\phi(x_1,\ldots,x_n)$ of dependence logic such that  for all first-order models $\M$ and non-empty teams (i.e., sets of assignments) $X$ over the variables  $\{x_1,\ldots,x_n\}$:
\begin{equation}\label{eq1}
 \M \models _X \phi(x_1,\ldots,x_n) \iff (\M,X[\vec{x}])\models \psi (R),  
 \end{equation}
where $X[\vec{x}]=\{ (s(x_1),\ldots, s(x_n)) \, |\, s\in X  \}$ is the relation encoding the team  $X$.

Our primary focus in this paper is an interesting team-based logic, closely related to inquisitive logic \cite{Ciardelli:18book,Ciardelli:23book}, a research program that aims to extend the scope of logic to include not only statements, but also questions. The standard system of inquisitive first-order logic, called $\inqBQ$ \cite{Ciardelli:23book},\footnote{In the inquisitive semantics literature, the so-called ``basic'' system of inquisitive propositional logic is denoted \textsf{InqB}; the name $\inqBQ$ reflects the fact that this system is obtained by enriching \textsf{InqB} with Quantifiers.} can be seen as an extension of standard first-order logic with a question-forming disjunction, $\lori$, and a question-forming existential, $\existsi$. Using these operators, one can express, in addition to statements like ``all objects are $P$'' (expressed as usual by $\forall xPx$) also questions such as ``whether or not all objects are $P$'' ($\forall xPx\lori\neg\forall xPx$), ``which objects are $P$'' ($\forall x(Px\lori\neg Px)$), and ``what is one example of a $P$'' ($\existsi xPx$). 

The logic $\inqBQ$ is interpreted in terms of an intensional semantics involving multiple possible worlds, each capturing a state of affairs and corresponding formally to a standard relational structure for predicate logic. However, the very same language can also be interpreted relative to teams, leading to a system known as \emph{inquisitive team logic}, $\inqBT$ \cite{Ciardelli:23book,CiardelliConti:26}.\footnote{This system was first considered by Yang  \cite{yang2014} under the name \textsf{WID} (for weak intuitionistic dependence logic).} In this system, one may express questions about the values of variables, including in particular the question ``what is the value of $x$?'', which may be expressed by the formula $\mu x:=\forall y({y=x}\,\lori\, {y\neq x})$ (or, equivalently, by the formula $\lambda x:=\existsi y(y=x)$). Dependencies can be expressed in this system as implications among questions: in particular, the dependence atom $\dep{\vec{x}}{y}$ can be defined in $\inqBT$ by the formula $\mu x_1\land\dots\land\mu x_n\to \mu y$.
While the logics $\inqBQ$ and $\inqBT$ are different, they are related by entailment-preserving translations that often allow a transfer of results between them \cite{CiardelliConti:26}.

In spite of many investigations, which have led to important results (\cite{Grilletti:19,Grilletti:20,Grilletti:21,CiardelliGrilletti:22,GrillettiCiardelli:23}), some key questions about the meta-theoretic properties of the (semantically defined) inquisitive logics $\inqBT$ and $\inqBQ$ remain open. It is not known, e.g., whether the sets of validities of these logics are recursively enumerable (and, thus, if a complete axiomatization is possible), nor whether these logics are entailment-compact, in the sense that a conclusion follows from a set of premises only if it follows from a finite subset of these premises. 

In this paper, we investigate the expressive power of the logics \inqBT\ and \inqBQ. In the case of \inqBT, a simple argument going back to \cite{yang2014} shows that with respect to sentences, the expressive power of this system coincides with that of first-order logic. The question about the expressive power of open formulas, however, is currently open. In particular,
it is not known whether open formulas of $\inqBT$ can be systematically translated to sentences of first-order logic in a language extended with a relation symbol referring to the team.

\begin{itemize}
\item \textbf{Open Question 1.} Given a formula $\phi(x_1,\dots,x_n)$ of $\inqBT$, is there always a first-order sentence $\psi(R)$, in a signature extended with an $n$-ary relation symbol $R$, which is equivalent to $\phi(x_1,\dots,x_n)$ in the sense of equation (\ref{eq1}) above?
\end{itemize}

\noindent
A wide range of particular examples admitting a first-order translation may suggest a positive answer to this question. We show that, on the contrary, the answer is negative: there are open formulas of $\inqBT$ that do not have a first-order counterpart---expressing genuinely second-order properties. 

On the way to this result, we consider a richer logic, denoted $\inqBT+[x]$. In $\inqBT$, the two quantifiers $\forall x$ and $\existsi x$ have a simple semantics: they explore different ways of assigning to $x$ a constant value throughout the team (in the team semantics literature, these quantifiers are standardly denoted by means of the notation $\forall^1 x$ and $\exists^1 x$, see for instance \cite{KontinenV09}). In the logic $\inqBT+[x]$, we add to these quantifiers a third quantifier, $[x]$, corresponding to the universal quantifier adopted in dependence logic (and denoted in that context simply as $\forall x$): semantically, this quantifier expands the team with all possible values for~$x$.

The reasons to be interested in the logic $\inqBT+[x]$ are not merely technical. In recent work \cite{Ciardelli:26}, it has been argued that the quantifiers $\forall x$ and $[x]$ reflect two different ways to make general claims in natural language: an ``extensional'' way, corresponding to universal claims like (a) below, and an ``intensional'' way, corresponding to generic claims like (b).

\ex.[]
\a.[(a)] Every triangle has angles that sum to $360^\circ$
\b.[(b)] A triangle has angles that sum to $360^\circ$

The basic idea is that, while (a) is evaluated by considering many particular triangles, (b) is evaluated by considering a single but completely generic triangle. 
In \cite{Ciardelli:26}, the relationship between these two ways of expressing generality has been studied systematically in the context of a logic, $\textsf{InqWT}+[x]$, which is nothing but the $\existsi$-free fragment of $\textsf{InqBT}+[x]$. A central result of \cite{Ciardelli:26} is that every sentence of this logic can be translated (albeit in a rather complex way) into an equivalent sentence of standard predicate logic. It is natural to wonder whether such a result extends to the richer system $\textsf{InqBT}+[x]$, including the inquisitive existential quantifier $\existsi$.

\begin{itemize}
\item \textbf{Open Question 2.}\\ 
Is every sentence of $\inqBT+[x]$ equivalent to some sentence of standard first-order logic?
\end{itemize}
Again, in this paper we answer the question in the negative. We show that in $\inqBT+[x]$ we can write a sentence expressing the fact that the model is finite (or infinite)---a property which is famously not expressible in first-order logic. Using this result, we show that the logic $\inqBT+[x]$ is not compact (even in the weaker sense of satisfiability), and that it is not recursively axiomatizable (indeed, we show that the set of its validities is not even arithmetical). 

The sentence of $\inqBT+[x]$ which is shown to express finiteness has the form $[x][y]\phi(x,y)$, where $\phi(x,y)$ is an open formula of $\inqBT$. It is this formula $\phi(x,y)$ which we show not to be expressible in first-order logic augmented with a binary predicate $R$ for the team. 

Finally, we adapt our results to settle a related open question about the expressive power of the standard system of inquisitive first-order logic, $\inqBQ$. A model $M=(W,D,I)$ for \inqBQ\ consists of a domain $W$ of possible worlds, a domain $D$ of individuals, and a world-relative evaluation function $I$, which assigns to each world $w$ a standard relational structure $M_w$ over $D$. Such a model can be encoded as a two-sorted structure $M^*=(W,D,I^*)$ for a signature in which each function or relation symbols is expanded with an extra argument for the world of evaluation. A natural question is whether, modulo this encoding, there exists a translation from \inqBQ\ into two-sorted first-order logic, in the following sense.

\begin{itemize}
\item \textbf{Open Question 3.}  
Given a sentence $\phi$ of $\inqBQ$, is there always a corresponding sentence $\phi^*$ of two-sorted first-order logic (over a suitably translated signature incorporating world-dependency into the predicate and function symbols) such that for every model $M$, $M$ satisfies $\phi$ in \inqBQ\ if and only if $M^*$ satisfies $\phi^*$ in predicate logic?
\end{itemize}

\noindent
This question is explicitly posed as an open problem in \cite{Ciardelli:23book}. A positive answer is known for two important fragments of \inqBQ: the \emph{classical antecedent (clant) fragment} \cite{Grilletti:21,Ciardelli:23book}, where antecedents are restricted to formulas of standard first-order logic, and the \emph{restricted existential (rex) fragment} \cite{CiardelliGrilletti:22}, where inquisitive existentials can occur only within conditional antecedents. Using an adaptation of the proof we develop for \inqBT, we show that in the general case, the answer to the question is negative: there are formulas of \inqBQ\ which have no first-order counterpart, expressing genuinely second-order properties.

\medskip
\noindent
{\bf Organization.}
The paper is structured as follows. In \S\ref{Prels} we cover the necessary background on $\inqBT$ and $\inqBT+[x]$. In \S\ref{results} we prove the novel results about these logics, settling Open Questions 1 and 2. \S\ref{sec:inqbq} extends the scope to \inqBQ, settling Open Question 3. \S\ref{conc} concludes with a summary and open problems.

\section{Preliminaries}\label{Prels}

In this section, we introduce the inquisitive team logic \inqBT\ and its extension \inqBTe\ which are the subjects of this paper. We also mention some of their basic properties. We omit the proofs of the results in this section, since they are straightforward and standard in the literature (see, e.g., \cite{Ciardelli:23book}).

The syntax of \inqBT\ and \inqBTe, and the classical fragment of these logics, are given by the following BNF definitions, where $p$ denotes an atom of predicate logic which, as usual, can have the form $(t_1=t_2)$ or the form $P(t_1,\dots,t_n)$, where $P$ is an $n$-ary predicate and $t_1,t_2,\dots,t_n$ are terms.
\begin{align*}
&\text{Syntax of \inqBT} && \phi\;::=\; p\mid \bot\mid (\phi\land\phi)\mid (\phi\lori\phi)\mid(\phi\to\phi)\mid \forall x\phi\mid\existsi x\phi\\
&\text{Syntax of \inqBTe} && \phi\;::=\; p\mid \bot\mid (\phi\land\phi)\mid (\phi\lori\phi)\mid(\phi\to\phi)\mid \forall x\phi\mid\existsi x\phi\mid[x]\phi\\
&\text{Classical formulas} &&\phi\;::=\; p\mid \bot\mid (\phi\land\phi)\mid(\phi\to\phi)\mid \forall x\phi
\end{align*}

The set of classical formulas can be identified with the standard language of first-order predicate logic, with a particular choice of primitives. The remaining operators can be defined in a standard way by letting $\neg\phi:=(\phi\to\bot)$; $\phi\lor\psi:=\neg(\neg\phi\land\neg\psi)$; $\phi\leftrightarrow\psi:=(\phi\to\psi)\land(\psi\to\phi)$; $\exists x\phi:=\neg\forall x\neg\phi$.
In addition, it is standard in inquisitive logic to define a question-mark operator as $?\phi:=(\phi\lori\neg\phi)$.

The semantics of these systems is given in terms of a relation of \emph{support}, which is defined relative to a model and a team. A model is defined, as usual in predicate logic, as a pair $\M=(D,I)$ consisting of a non-empty domain $D$ and an interpretation function $I$ assigning to each predicate or function symbol in the vocabulary a relation or function over $D$ with the corresponding arity. A team is defined as a set of assignments defined over a common domain of variables; we use the meta-variables $X,Y,\dots$ for teams, and indicate by $\text{dom}(X), \text{dom}(Y),\dots$ the corresponding domains, i.e., the sets of variables on which the assignments in the teams are defined. If $\{x_1,\dots,x_n\}\subseteq\text{dom}(X)$, we denote by $X[x_1,\dots,x_n]$ the $n$-ary relation on $D$ defined by
$$X[x_1,\dots,x_n]=\{(g(x_1),\dots,g(x_n))\mid g\in X\}.$$
Note that, if $x_1,\dots,x_n$ is a finite sequence of variables, there is a one-to-one correspondence between teams defined on the set $\{x_1,\dots,x_n\}$ and $n$-ary relations on $D$, given by the map $X\mapsto X[x_1,\dots,x_n]$.

To state the semantics, we will make use of two operations on teams. First, if $g$ is an assignment, $x$ is a variable, and $d$ an object from the domain $D$ of the model, we denote by $g[x\mapsto d]$ the assignment over the variables $\text{dom}(g)\cup\{x\}$ which maps $x$ to $d$ and agrees with $g$ on the remaining variables in $\text{dom}(g)$. Now if $X$ is a team, $x$ a variable, and $d\in D$ an object, we denote by $X[x\mapsto d]$ the team over the domain $\text{dom}(X)\cup\{x\}$ obtained by assigning to $x$ the constant value $d$ through the team:
$$X[x\mapsto d]=\{g[x\mapsto d]\mid g\in X\}.$$
Similarly, if $X$ is a team, $x$ a variable, and $A\subseteq D$ a set of objects, we denote by $X[x\mapsto A]$ the team over $\text{dom}(X)\cup\{x\}$ obtained by assigning to $x$ each possible value from $A$, in combination with all values for the remaining variables represented in $X$:
$$X[x\mapsto A]=\{g[x\mapsto d]\mid g\in X,d\in A\}=\bigcup_{d\in A}X[x\mapsto d].$$
If $t$ is a term, its denotation $[t]_\M^g$ relative to a model $\M$ and an assignment $g$ defined on the variables occurring in $t$ is defined in the usual way. We can now state the semantics of our logics as follows.

\begin{definition} The relation of \emph{support} $\M\models_X\phi$ between a model $\M$, a team $X$, and a formula $\phi$ whose free variables are included in $\text{dom}(X)$ is defined inductively as follows:
\begin{itemize}
    \item $\M\models_X p\iff \forall g\in X:\M\models_g p$ in standard Tarskian semantics\hfill if $p$ is an atom
    \item $\M\models_X\bot\iff X=\emptyset$
    \item $\M\models_X\phi\land\psi\iff\M\models_X\phi$ and $\M\models_X\psi$
    \item $\M\models_X\phi\lori\psi\iff\M\models_X\phi$ or $\M\models_X\psi$
    \item $\M\models_X\phi\to\psi\iff\forall Y\subseteq X:\M\models_Y\phi$ implies $\M\models_Y\psi$
    \item $\M\models_X\forall x\phi\iff\forall d\in D:\M\models_{X[x\mapsto d]}\phi$
    \item $\M\models_X\existsi x\phi\iff\exists d\in D:\M\models_{X[x\mapsto d]}\phi$
    \item $\M\models_X[x]\phi\iff\M\models_{X[x\mapsto D]}\phi$
\end{itemize}
\end{definition}
\noindent
For a detailed discussion of the clauses for atoms and connectives, we refer to \cite{Ciardelli:23book}. As for the quantifiers, $\forall x$ and $\existsi x$ test different ways of assigning to the variable $x$ a constant value, and check whether the embedded formula is supported in all/some of the resulting teams. By contrast, $[x]$ checks the support of its argument relative to a single team, obtained by assigning to $x$ all possible values. For
discussion of the philosophical difference between the kinds of generality achieved by $\forall x$ and $[x]$, see \cite{Ciardelli:26}.
The following general properties are familiar from the literature on inquisitive logic and dependence logic.

\begin{proposition}[Persistency] For any model $\M$, teams $X,Y$, and formulas $\phi$ of \inqBTe, if $\M\models_X\phi$ and $Y\subseteq X$ then $\M\models_Y\phi$.
\end{proposition}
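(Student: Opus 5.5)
We argue by induction on the structure of $\phi$, proving simultaneously for all teams $X$ and all $Y\subseteq X$ (with $\text{dom}(Y)=\text{dom}(X)$) that $\M\models_X\phi$ implies $\M\models_Y\phi$. The key auxiliary fact is that both team operations used in the quantifier clauses are monotone: if $Y\subseteq X$, then $Y[x\mapsto d]\subseteq X[x\mapsto d]$ for every $d\in D$, and $Y[x\mapsto D]\subseteq X[x\mapsto D]$. Both follow immediately from the definitions, since every assignment $g[x\mapsto d]$ with $g\in Y$ is also of the form $g[x\mapsto d]$ with $g\in X$.

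For the base cases: if $p$ is an atom, support is a universal condition over the members of the team, so it passes to subsets. For $\bot$, $X=\emptyset$ forces $Y=\emptyset$.

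The propositional cases are direct:
\begin{itemize}
\item For $\land$ and $\lori$, apply the induction hypothesis to the conjunct or disjunct that is supported.
\item For $\to$, no induction hypothesis is needed. Any $Z\subseteq Y$ is also a subset of $X$, so the condition quantifying over subsets of $X$ restricts to subsets of $Y$.
\end{itemize}

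For the quantifiers:
\begin{itemize}
\item For $\forall x\phi$ and $\existsi x\phi$, take the witnessing value(s) $d$ for $X$. By monotonicity, $Y[x\mapsto d]\subseteq X[x\mapsto d]$, and the induction hypothesis for $\phi$ applied to this pair of teams gives support at $Y[x\mapsto d]$.
\item For $[x]\phi$, apply the induction hypothesis to $Y[x\mapsto D]\subseteq X[x\mapsto D]$.
\end{itemize}

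Since the induction hypothesis is quantified over all pairs of teams, it can be applied to these modified teams even though they are not $X$ and $Y$ themselves. There is no real obstacle. The only point needing care is to state the induction over all teams rather than fixing $X$, so that the quantifier cases go through; the monotonicity observation above is the sole extra ingredient.
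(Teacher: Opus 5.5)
Your proof is correct. It is the standard induction on the structure of $\phi$, which the paper does not write out: it omits all Section~2 proofs as routine and cites the literature. You have the right ingredients, namely quantifying the induction hypothesis over all pairs of teams and using the monotonicity of $X\mapsto X[x\mapsto d]$ and $X\mapsto X[x\mapsto D]$; the implication case needs no induction hypothesis, as you note.
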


\begin{proposition}[Empty team property] For any model $\M$, $\M\models_\emptyset\phi$ for all formulas $\phi$ of \inqBTe.
\end{proposition}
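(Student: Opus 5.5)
The plan is a straightforward induction on the structure of $\phi$, showing that for every model $\M$, $\M\models_\emptyset\phi$ holds for all formulas $\phi$ of \inqBTe. The key observation is that each of the team operations used in the semantics sends the empty team to the empty team. Indeed, $\emptyset[x\mapsto d]=\{g[x\mapsto d]\mid g\in\emptyset\}=\emptyset$ for any $d\in D$, and likewise $\emptyset[x\mapsto A]=\bigcup_{d\in A}\emptyset[x\mapsto d]=\emptyset$; in particular $\emptyset[x\mapsto D]=\emptyset$. Also, the only subset of $\emptyset$ is $\emptyset$ itself.

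\textbf{Base cases.} For an atom $p$, the clause $\forall g\in\emptyset:\M\models_g p$ holds vacuously. For $\bot$, support holds by definition, since the team is $\emptyset$.

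\textbf{Inductive steps.} For conjunction, both conjuncts are supported by $\emptyset$ by the induction hypothesis. For $\lori$, the left disjunct alone suffices. For $\phi\to\psi$, we must check that for every $Y\subseteq\emptyset$, if $\M\models_Y\phi$ then $\M\models_Y\psi$. The only such $Y$ is $\emptyset$, where $\psi$ is supported by the induction hypothesis, so the implication holds.

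For $\forall x\phi$, each team $\emptyset[x\mapsto d]$ equals $\emptyset$, which supports $\phi$ by the induction hypothesis. For $\existsi x\phi$, we use that the domain $D$ of a model is non-empty by definition: picking any $d\in D$ gives $\emptyset[x\mapsto d]=\emptyset$, which supports $\phi$. For $[x]\phi$, we have $\emptyset[x\mapsto D]=\emptyset$, which supports $\phi$.

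One technical point is that the support relation is only defined when the free variables of $\phi$ lie in $\text{dom}(X)$, and the empty set of assignments is ambiguous about its domain. I would handle this by reading $\emptyset$ as the empty team over any domain of variables that is large enough. Since the argument is uniform in the domain, it goes through unchanged.

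I expect no real obstacle. The only step needing a moment's care is the $\existsi$ case, which depends on non-emptiness of $D$. This is guaranteed by the definition of a model.
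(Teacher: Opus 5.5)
Your proposal is correct and is the routine structural induction the paper has in mind when it omits the proof as ``straightforward and standard.'' You correctly single out the two points that need care: the $\existsi x$ case relies on the non-emptiness of $D$, and the domain of the empty team is left implicit.
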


\begin{proposition}[Locality] For any model $\M$, any formula $\phi$ of $\inqBTe$, and all teams $X,Y$ whose domain includes the set $\text{FV}(\phi)$ of free variables in $\phi$:
$$X|_{\text{FV}(\phi)}=Y|_{\text{FV}(\phi)}\text{ implies }\M\models_X\phi\iff\M\models_Y\phi,$$
where $X|_{\text{FV}(\phi)}$ denotes the team obtained by restricting each assignment in $X$ to $\text{FV}(\phi)$, and likewise for $Y$.
\end{proposition}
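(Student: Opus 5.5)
We prove the Locality statement by induction on the construction of $\phi$. The claim we carry through the induction is the one stated: for all teams $X,Y$ whose domains include $\text{FV}(\phi)$, if $X|_{\text{FV}(\phi)}=Y|_{\text{FV}(\phi)}$ then $\M\models_X\phi\iff\M\models_Y\phi$. By symmetry it suffices to show one direction. It helps to reformulate the hypothesis: for every $g\in X$ there is $h\in Y$ agreeing with $g$ on $\text{FV}(\phi)$, and vice versa. Since $\text{FV}$ of a subformula is contained in $\text{FV}(\phi)$, the hypothesis passes down to the immediate subformulas $\psi,\chi$ after restricting to the smaller set.

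\textbf{Atoms and $\bot$.} For atoms, Tarskian satisfaction $\M\models_g p$ depends only on $g|_{\text{FV}(p)}$. The condition ``all $g\in X$ satisfy $p$'' is therefore a condition on the set $X|_{\text{FV}(p)}$. For $\bot$, $X=\emptyset$ iff $X|_{\emptyset}=\emptyset$ iff $Y|_\emptyset=\emptyset$ iff $Y=\emptyset$.

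\textbf{Conjunction and inquisitive disjunction.} These cases are immediate from the induction hypothesis, applied to both disjuncts or conjuncts.

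\textbf{Quantifiers.} For $\forall x\psi$ and $\existsi x\psi$, note that $\text{FV}(\psi)\subseteq\text{FV}(\phi)\cup\{x\}$ and that $X[x\mapsto d]|_{\text{FV}(\phi)\cup\{x\}}=Y[x\mapsto d]|_{\text{FV}(\phi)\cup\{x\}}$. The induction hypothesis then gives the equivalence for each $d$. The case $[x]\psi$ is analogous with $X[x\mapsto D]$ and $Y[x\mapsto D]$: a restricted assignment in $X[x\mapsto D]$ has the form $(g|_{\text{FV}(\phi)})[x\mapsto d]$, which is matched in $Y[x\mapsto D]$.

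\textbf{Implication.} This is the main obstacle, because it quantifies over subteams and subteams of $X$ do not literally correspond to subteams of $Y$. Assume $\M\models_X\psi\to\chi$ and let $Y'\subseteq Y$ with $\M\models_{Y'}\psi$. Set $V=\text{FV}(\phi)$ and define
$$X'=\{g\in X\mid g|_V\in Y'|_V\}\subseteq X.$$
Since $X|_V=Y|_V$, we get $X'|_V=Y'|_V$. The induction hypothesis, applied to $\psi$ and $\chi$ (whose free variables lie in $V$, so equality on $V$ implies equality on their free variables), gives $\M\models_{X'}\psi$. Hence $\M\models_{X'}\chi$, and so $\M\models_{Y'}\chi$.

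The only care needed throughout is to state the induction hypothesis for arbitrary pairs of teams, not just for the fixed $X,Y$, so that it can be applied to modified teams such as $X[x\mapsto d]$ and $X'$.
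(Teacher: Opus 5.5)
Your proof is correct. It is the standard induction on formulas that the paper has in mind: the paper omits the proof as routine and defers to the literature. In particular, you handle the only delicate case, implication, correctly by pulling back the subteam to $X'=\{g\in X\mid g|_V\in Y'|_V\}$ and checking $X'|_V=Y'|_V$, and you rightly keep the induction hypothesis quantified over all pairs of teams so that it applies to $X[x\mapsto d]$, $X[x\mapsto D]$ and $X'$.
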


\noindent
Note that for a sentence $\phi$, all nonempty teams have the same restriction to $\text{FV}(\phi)=\emptyset$, namely, the singleton of the empty assignment. So by Locality, all nonempty teams agree about whether or not $\phi$ is supported. We can thus say that a model $\M$ satisfies $\phi$ (written $\M\models\phi$) in case $\M\models_X\phi$ holds relative to an arbitrary nonempty team $X$; if $\M\not\models\phi$ we say that $\M$ falsifies $\phi$. Classical propositional connectives behave standardly with respect to the relation of satisfaction, for instance, $\M\models\neg\phi\iff\M\not\models\phi$.

The logical notions of entailment, validity, and equivalence are defined in the obvious way: a set of formulas $\Phi$ entails a formula $\psi$ (denoted $\Phi\models\psi$), if in every model, any team that is defined on all the free variables in $\Phi\cup\{\psi\}$ and supports all $\phi\in\Phi$ also supports $\psi$; a formula $\phi$ is valid ($\models\phi$) if in every model it is supported by any team whose domain includes its free variables; two formulas $\phi,\psi$ are equivalent ($\phi\equiv\psi$) if they entail each other. As for the notion of satisfiability, given the empty state property, it must be defined with respect to nonempty teams only: $\phi$ is satisfiable if $\M\models_X\phi$ for some model $\M$ and some nonempty team $X$; note that if $\phi$ is a sentence, satisfiability simply means that $\phi$ is satisfied in some model, as usual.

The connection with standard first-order logic is made through an important semantic property called \emph{flatness} (or \emph{truth-conditionality} in the inquisitive literature).
We say that a formula $\phi$ is \emph{flat} if support for $\phi$ relative to a team reduces to support at each singleton sub-team.

\begin{definition}
    A formula $\phi$ is flat if for all models $\M$ and teams $X$ with $\text{dom}(X)\supseteq\text{FV}(\phi)$ we have
    $$\M\models_X\phi\iff\forall g\in X:\M\models_{\{g\}}\phi.$$
\end{definition}

\noindent
The following proposition shows that with respect to the classical fragment, our team semantics is essentially equivalent to the standard Tarskian semantics.

\begin{proposition}\label{prop:classical}
For classical formulas, support relative to a singleton team $\{g\}$ coincides with truth relative to $g$ in Tarskian semantics. Moreover, all classical formulas are flat. Thus, for a classical formula, support at a team $X$ coincides with truth relative to each $g\in X$ in Tarskian semantics. In particular, for a classical sentence, our definition of satisfaction in a model coincides with the one given by Tarskian semantics.
\end{proposition}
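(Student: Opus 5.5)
The plan is a simultaneous induction on classical formulas $\phi$ that proves two claims together. The first is that for every assignment $g$ defined on $\text{FV}(\phi)$, $\M\models_{\{g\}}\phi$ iff $\M\models_g\phi$ in Tarskian semantics. The second is that $\phi$ is flat, i.e. $\M\models_X\phi$ iff $\M\models_{\{g\}}\phi$ for all $g\in X$. Once both claims are established, the remaining assertions of Proposition~\ref{prop:classical} follow at once. Support at $X$ is then equivalent to Tarskian truth at each $g\in X$. For a sentence, take any nonempty team $X$: by Locality the choice does not matter, and support at $X$ amounts to Tarskian truth of $\phi$ at the empty assignment.

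\textbf{Base cases and conjunction.} For an atom $p$, the support clause already quantifies over each $g\in X$, so flatness and agreement with Tarski are immediate. For $\bot$, a singleton is never empty, so $\{g\}$ does not support $\bot$, which matches Tarskian falsity. Moreover $X$ supports $\bot$ iff $X=\emptyset$ iff no $g\in X$ supports it, which gives flatness. Conjunction is straightforward: both claims commute with $\land$ by the induction hypothesis.

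\textbf{The universal quantifier.} We have $\M\models_X\forall x\phi$ iff for all $d$, $\M\models_{X[x\mapsto d]}\phi$. By flatness of $\phi$, this holds iff for all $d$ and all $g\in X$, $\M\models_{\{g[x\mapsto d]\}}\phi$. Swapping the two universal quantifiers, this holds iff for all $g\in X$, $\M\models_{\{g\}}\forall x\phi$, since $\{g\}[x\mapsto d]=\{g[x\mapsto d]\}$. This gives flatness. Agreement with Tarski at a singleton then follows from the induction hypothesis on $\phi$ together with the Tarskian clause for $\forall$.

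\textbf{Implication.} This is the only step that uses the intensional subteam clause, and I expect it to be the main obstacle. At a singleton, the subteams of $\{g\}$ are $\emptyset$ and $\{g\}$. The empty team supports everything, so $\M\models_{\{g\}}\phi\to\psi$ iff ($\M\models_{\{g\}}\phi$ implies $\M\models_{\{g\}}\psi$), which is the Tarskian clause by the induction hypothesis. For flatness, the left-to-right direction is Persistency applied to the singleton subteams $\{g\}\subseteq X$. For the converse, suppose every $g\in X$ supports $\phi\to\psi$, and let $Y\subseteq X$ support $\phi$. By flatness of $\phi$, each $g\in Y$ supports $\phi$, hence supports $\psi$ by the singleton case. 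By flatness of $\psi$, $Y$ then supports $\psi$. So $X$ supports $\phi\to\psi$, which completes the induction.
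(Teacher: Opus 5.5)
Your simultaneous induction proving singleton agreement with Tarskian semantics together with flatness is correct. It is the routine argument the paper has in mind when it omits the proof as standard. One wording fix is needed in the $\bot$ case: flatness requires $\M\models_X\bot$ iff \emph{every} $g\in X$ satisfies $\M\models_{\{g\}}\bot$, and since no singleton supports $\bot$ this holds exactly when $X=\emptyset$; your phrase ``no $g\in X$ supports it'' is true of every team, so it should be replaced by this formulation.
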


\noindent
Using this fact, it is easy to show that our logics are, in a precise sense, conservative extensions of classical first-order logic: for classical formulas, the notions of entailment, validity, and equivalence determined by our semantics coincide with those of classical first-order logic.

The property of flatness extends beyond the classical fragment to formulas that include $[x]$ but not the inquisitive operators $\lori$ and $\existsi$: indeed, any $\{\lori,\existsi\}$-free formula is equivalent to a classical one, since when $\alpha$ is flat, $[x]\alpha\equiv\forall x\alpha$. On the other hand, formulas including the inquisitive operators $\lori$ and $\existsi$ are typically not flat: they express global properties of a team which do not reduce to local properties of the individual assignments in the team. 

For a simple example, suppose $\alpha$ is a classical formula and consider the formula $?\alpha$ ( $:=\alpha\lori\neg\alpha$). It is easy to check that we have:
$$\M\models_X{?\alpha}\iff\alpha\text{ has the same truth value relative to each }g\in X,$$
where the right-hand side refers to the standard truth-value in standard Tarskian semantics. Intuitively, the formula $?\alpha$ is regarded as capturing the question whether $\alpha$ is true or false; it is supported if all assignments in the team agree on the answer to this question.
Obviously, $?\alpha$ is supported relative to any singleton team, but (typically) not relative to an arbitrary team, thus violating truth-conditionality.

Another example of an inquisitive formula, which will play an important role for our purposes, is the following. For a variable term $t$, let us define
$$\mu t\;:=\;\forall x?(x=t),$$
where $x$ is an arbitrary variable not occurring in $t$. It is easy to check that $\mu t$ is supported by a team $X$ just in case the value of $t$ is constant throughout $X$:
$$\M\models_X\mu t\iff \forall g,g'\in X:[t]_M^g=[t]_M^{g'}.$$
Intuitively, $\mu t$ expresses the question ``what is $t$''. We refer to formulas of the form $\mu t$ as \emph{value questions}.\footnote{It is easy to check that the same result can be obtained by the formula $\lambda t\,:=\,\existsi x(x=t)$. This was indeed the strategy adopted in some previous work \cite{Ciardelli:23book}. Here, we prefer to make use of the formulas $\mu t$ (defined in terms of $\forall$ and $?$) rather than $ \lambda t$ (defined in terms of $\existsi$)  since the former option allows us to obtain formulas expressing dependency that fit within the $\existsi$-free fragment of \inqBT, which is known to be especially well-behaved: as shown in \cite{CiardelliGrilletti:22} and \cite{CiardelliConti:26}, this fragment is entailment-compact and recursively axiomatizable.}
Using a strategy that goes back to \cite{abramsky}, we can then express dependencies in terms of value questions and implication. We have:
\begin{eqnarray*}
    \M\models_X\mu x_1\land\dots\land\mu x_n\to \mu y&\iff &\forall g,g'\in X: \;  \text{if }g(x_i)=g'(x_i)\text{ for }1\le i\le n\\
    &&\phantom{\forall g,g'\in X: }\;\text{ then }g(y)=g'(y).
\end{eqnarray*}
This is exactly the semantics of a dependence atom $=(\vec x,y)$ in dependence logic (where $\vec x=x_1\dots x_n$). For convenience, we will adopt the notation $=(\vec x,y)$ as an abbreviation, defined as follows:
$$=(\vec x,y)\quad:=\quad \mu x_1\land\dots\land\mu x_n\to \mu y.$$

\section{Results on \inqBT\ and \inqBTe}\label{results}

In this section we present our results for the logics $\inqBTe$ and $\inqBT$. 

\begin{theorem}\label{thm1} There is a sentence $\psi$ of $\inqBTe$ in the empty vocabulary which defines finiteness, i.e., such that for every model $\M$: $\M\models\psi\iff \M$ is finite. 
 \end{theorem}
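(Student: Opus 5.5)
The plan is to express finiteness as Dedekind-finiteness: $D$ is finite iff every injective map $D\to D$ is surjective. The non-downward-closed conditions ``total'' and ``surjective'' can then be handled through the implication clause, which quantifies over all subteams. The sentence will have the form $\psi:=[x][y]\,(A\to B)$, where $A,B$ are $\inqBT$-formulas with free variables $x,y$. After applying the two $[\cdot]$ quantifiers, the team becomes $X=\{g\}[x\mapsto D][y\mapsto D]$, whose relation $X[x,y]$ is all of $D\times D$. By Locality, subteams $Y\subseteq X$ are then in effect arbitrary binary relations $R=Y[x,y]\subseteq D\times D$. So $\M\models\psi$ says: every relation $R\subseteq D\times D$ satisfying $A$ also satisfies $B$.

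I would choose
$$A:=\;=(x,y)\land=(y,x)\land\existsi z(y\neq z),\qquad B:=\;\existsi z(x\neq z).$$
The key step is to compute the support conditions at a team $Y$ directly from the definitions, using the semantics of the dependence abbreviation from \S\ref{Prels}. The formula $=(x,y)$ holds iff $R$ is (the graph of) a partial function, and $=(y,x)$ holds iff that function is injective. For the existential, $\existsi z(y\neq z)$ is supported iff there is $d\in D$ with $g(y)\neq d$ for every $g\in Y$ (the atom is flat on $Y[z\mapsto d]$), that is, iff $R$ is not surjective onto $D$. Likewise $B$ holds iff some $d$ is not in the domain of $R$, i.e.\ iff $R$ is not total. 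Hence $\M\models_X A\to B$ iff every injective, non-surjective partial function on $D$ is non-total. Equivalently, there is no total injective non-surjective function $D\to D$, which is exactly Dedekind-finiteness of $D$.

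Finally, I would invoke the standard fact (using countable choice) that $D$ is Dedekind-finite iff it is finite. For the direction ``infinite implies $\M\not\models\psi$'', I would exhibit the relation explicitly. Pick an injection $n\mapsto a_n$ of $\N$ into $D$, and let $R$ send $a_n\mapsto a_{n+1}$ while fixing every other element. This $R$ is a total injective function, so $B$ fails, and it misses $a_0$, so $A$ holds. The corresponding subteam $Y\subseteq X$ therefore witnesses the failure of $A\to B$. For ``finite implies $\M\models\psi$'', it suffices that an injective self-map of a finite set is surjective. Since $\psi$ uses only equality, it lies in the empty vocabulary.

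The main obstacle is conceptual rather than computational: ``total'' and ``surjective'' are not persistent, so they cannot appear directly as supported formulas. The device that resolves this is to place their downward-closed negations, ``non-surjective'' and ``non-total'' (both expressible with $\existsi$), on opposite sides of $\to$. The universal quantification over subteams then yields the required existential claim about a total, non-surjective relation when $A\to B$ fails. Once this choice is made, the remaining checks are routine: the semantics of $\existsi z(\cdot\neq z)$ and the degenerate case $|D|=1$, where $A$ is never supported by a nonempty team and $\psi$ holds.
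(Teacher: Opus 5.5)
Your proposal is correct and is essentially the paper's own proof. You use the same sentence $[x][y]\big(\dep{x}{y}\land\dep{y}{x}\land\existsi z(z\neq y)\to\existsi u(u\neq x)\big)$, read subteams of the full $\{x,y\}$-team as arbitrary binary relations on $D$, and reduce to the fact that a set is finite iff every injective self-map is surjective; your explicit shift-map witness for the infinite case just spells out a step the paper leaves implicit.
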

\begin{proof}We utilize the fact that a set $A$ is finite if and only if every injection $f\colon A \rightarrow A$ is also a surjection. Recall that we write $\dep{x}{y}$ to abbreviate the formula $\mu x\to\mu y$, that is, $\forall z?(z=x)\to\forall z?(z=y)$, and similarly for $\dep{y}{x}$.
Define: 
\begin{eqnarray}
\label{phi}\phi(x,y)&:=&\big ( \dep{x}{y}\, \wedge \dep{y}{x}\land \existsi z (z\neq y)\; \rightarrow\;  \existsi u  (u\neq x)  \big),\\
\label{psi}\psi&:=&[x][y]\phi(x,y).
\end{eqnarray}
A model $\M=(D,I)$ falsifies $\psi$ just in case the team $X$ that consists of all assignments of values to $x$ and $y$ fails to support the formula $\phi(x,y)$. 
By the semantics of implication, this holds iff there is a sub-team $Y\subseteq X$ which supports the antecedent but not the consequent. As discussed above, a team $Y$ with domain $\{x,y\}$ can be identified with a binary relation $R=Y[x,y]=\{(g(x),g(y))\mid g\in Y\}$. Spelling out the semantics, we find that:
\begin{multicols}{2}
\begin{itemize}
\item $\M\models_Y \;\dep{x}{y}\iff R$ is a function;
\item $\M\models_Y \;\dep{y}{x}\iff R$ is injective;
\item $\M\not\models_Y\existsi u  (u\neq x)\iff \text{dom}(R)=D$;
\item $\M\models_Y\existsi z(z\neq y)\iff \text{ran}(R)\neq D$.
\end{itemize}
\end{multicols}
\noindent
Putting things together, then, $\psi$ is falsified in a model $\M=(D,I)$ just in case there exists a function defined on $D$ which is injective but not surjective, that is, just in case $D$ is infinite. Equivalently, $\psi$ is satisfied in a model $\M$ just in case $\M$ is finite.
\end{proof}

\noindent 
Since negation behaves classically at the level of sentences, our theorem also implies that the negation $\neg\psi$ of the sentence defined in \ref{psi} defines infinity, i.e., it is satisfied exactly by the infinite models. 
Since finiteness and infinity of the model are famously not expressible in first-order logic, our theorem immediately yields a negative answer to Open Question 2 in the introduction.

\begin{corollary} Some sentences of \inqBTe\ are not equivalent to any sentence of standard first-order logic.
\end{corollary}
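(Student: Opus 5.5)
The corollary follows by combining Theorem~\ref{thm1} with the classical fact that finiteness is not first-order definable. I would argue by contradiction. Take the sentence $\psi=[x][y]\phi(x,y)$ of Theorem~\ref{thm1}, which is in the empty vocabulary, and suppose it were equivalent to some sentence $\chi$ of standard first-order logic. Equivalence here means mutual entailment in the sense of Section~\ref{Prels}. For sentences, mutual entailment amounts to: for every model $\M$, $\M\models\psi$ iff $\M\models\chi$. This uses Locality, since all nonempty teams agree on sentences, and the empty team supports everything by the empty team property. By Proposition~\ref{prop:classical}, satisfaction of $\chi$ in our semantics coincides with Tarskian truth.

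One detail needs care: $\chi$ might mention non-logical symbols, whereas $\psi$ is in the empty vocabulary. Since $\psi$'s truth does not depend on any interpretation, I would replace every atom of $\chi$ other than an equality by $\bot$, or equivalently fix one interpretation. A cleaner route avoids this altogether: use the compactness argument on an expanded vocabulary, where nothing changes.

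Concretely, assume $\chi$ is true in exactly the finite models. For each $n$, let $\delta_n$ be the classical sentence saying there are at least $n$ distinct elements. Consider the set $\{\chi\}\cup\{\delta_n\mid n\in\N\}$. Every finite subset has a model, namely a sufficiently large finite structure with an arbitrary interpretation of the symbols in $\chi$. By compactness of first-order logic, the whole set then has a model, which must be infinite and yet satisfy $\chi$. This contradicts the assumption.

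No step is a genuine obstacle. The only point needing care is the bookkeeping that "equivalent" in the paper's sense collapses, for sentences, to agreement on satisfaction in every model. This is exactly what Locality and Proposition~\ref{prop:classical} provide.
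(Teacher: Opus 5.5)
Your proposal is correct and follows the paper's argument: the corollary is simply Theorem~\ref{thm1} combined with the first-order undefinability of finiteness. Your compactness proof of that undefinability, your handling of the vocabulary of $\chi$, and your reduction of equivalence to agreement on satisfaction via Locality, the empty team property and Proposition~\ref{prop:classical} just make explicit what the paper treats as standard.
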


\noindent
Using the possibility of expressing finiteness in \inqBTe\ we can show that this logic, like second-order logic, violates compactness and has a non-recursively enumerable (and, indeed, non-arithmetical) set of validities. First, recall that in the setting of inquisitive logic and team-based logics, it makes sense to distinguish between two notions of compactness.

\begin{definition} 
A logic $L$ is said to be:
\begin{itemize}

\item\emph{satisfiability compact}, if for all sets of $L$-formulas $\Phi$ the following holds: if  every finite  $\Phi_0\subset \Phi$ is satisfiable, then $\Phi$ is satisfiable; 

\item \emph{entailment compact}, if for all sets of $L$-formulas $\Phi$ and any formula $\psi$, the following holds: if $\Phi\models_L \psi$, then 
$\Phi_0\models_L \psi$, for some finite $\Phi_0 \subseteq \Phi$.
\end{itemize}
\end{definition}

\noindent
For a team-based logic including $\bot$, entailment compactness implies satisfiability compactness (since the satisfiability of $\Phi$ reduces to the non-entailment $\Phi\not\models\bot$) but, in general, not the other way around; for instance, dependence logic is satisfiability compact, but not entailment compact  as there is a sentence of dependence logic satisfied exactly by the infinite models \cite{Vaananen:2007:dependence}. For the logic \inqBTe, by contrast, even satisfiability-compactness fails.

\begin{theorem} The logic $\inqBTe$ is not satisfiability-compact (and, thus, not  entailment-compact).
\end{theorem}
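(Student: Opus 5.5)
The plan is the standard compactness-failure argument, using the sentence $\psi$ from Theorem~\ref{thm1}, which is satisfied exactly by the finite models. For each $n\geq 1$, let $\chi_n$ be the classical sentence
$$\chi_n\;:=\;\exists x_1\dots\exists x_n\bigwedge_{1\le i<j\le n}x_i\neq x_j,$$
which says that the domain has at least $n$ elements. Consider the set
$$\Phi\;:=\;\{\psi\}\cup\{\chi_n\mid n\geq 1\},$$
all of whose members lie in the empty vocabulary.

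The first step is to show that every finite subset $\Phi_0\subseteq\Phi$ is satisfiable. Let $N$ be the largest $n$ with $\chi_n\in\Phi_0$, or $N=1$ if there is none. Take a model $\M$ whose domain has exactly $N$ elements. By Theorem~\ref{thm1}, $\M\models\psi$, since $\M$ is finite. By Proposition~\ref{prop:classical}, $\M\models\chi_n$ for every $n\le N$, because satisfaction of a classical sentence coincides with Tarskian truth. All members of $\Phi_0$ are sentences, so by Locality any nonempty team supports all of them in $\M$. Hence $\Phi_0$ is satisfiable.

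The second step is to show that $\Phi$ itself is not satisfiable. Suppose $\M\models_X\phi$ for all $\phi\in\Phi$, with $X$ nonempty. Since these are sentences, $\M$ satisfies each $\chi_n$, so $D$ has at least $n$ elements for every $n$, and $D$ is infinite. But $\M$ also satisfies $\psi$, so by Theorem~\ref{thm1} $D$ is finite. This is a contradiction.

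Together these two steps show that $\inqBTe$ is not satisfiability-compact. The failure of entailment compactness then follows from the remark before the statement. Namely, $\Phi\models\bot$, since no nonempty team supports all of $\Phi$ and the empty team supports $\bot$. Yet for no finite $\Phi_0\subseteq\Phi$ does $\Phi_0\models\bot$ hold, because $\Phi_0$ is supported by a nonempty team and a nonempty team does not support $\bot$.

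There is no real obstacle here; the work is all in Theorem~\ref{thm1}. The only point needing care is the move between support at nonempty teams and satisfaction in models for sentences. This is handled by Locality and Proposition~\ref{prop:classical}.
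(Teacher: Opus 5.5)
Your proposal is correct and follows essentially the same route as the paper. The paper likewise takes the classical "at least $n$ elements" sentences together with the finiteness sentence $\psi$ from Theorem~\ref{thm1}, and relies on Proposition~\ref{prop:classical}. You spell out the finite-subset step, the unsatisfiability step, and the reduction of entailment compactness via $\bot$ in more detail, but the argument is the same.
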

\begin{proof}
Define $\Gamma =\{\phi_n\, |\, n\in \mathbb{N}\}$, where $\phi_n$ is a classical sentence expressing the existence of at least $n$ distinct objects: 
$$ \exists x_1\ldots \exists x_n \bigwedge_{i\not= j} (x_i\not= x_j). $$
Using Theorem \ref{thm1} and Proposition \ref{prop:classical}, we can see that the set $\Gamma\cup\{\psi\}$, where $\psi$ is defined as in \ref{psi}, is not satisfiable, but any finite subset of this set is satisfiable.
\end{proof}

\begin{theorem} The set of valid formulas of  $\inqBTe$ has non-arithmetic complexity.
\end{theorem}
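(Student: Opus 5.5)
The plan is to reduce true arithmetic to the set of validities of \inqBTe, using the finiteness sentence $\psi$ from Theorem \ref{thm1}. This is the same route by which one shows that second-order logic, or first-order logic with a finiteness quantifier, has a non-arithmetical set of validities. The key fact is that the standard model $(\N,+,\times,0,1,<)$ is characterised up to isomorphism, among models of a finite first-order theory, by the condition that every element has only finitely many predecessors. That condition can be phrased as a finiteness statement about a definable subset of the domain.

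First, fix the vocabulary $\{+,\times,0,1,<\}$ (or relational versions of it) and let $\mathrm{PA}^-$ be the conjunction of finitely many axioms of Robinson-style arithmetic, stating that $<$ is a discrete linear order with least element $0$, that successor is $x+1$, and the recursive equations for $+$ and $\times$. Then observe that a model of $\mathrm{PA}^-$ in which every initial segment $\{y\mid y<a\}$ is finite is isomorphic to the standard model. The map $n\mapsto$ the $n$-th element is an isomorphism onto an initial segment, and the finiteness of all initial segments rules out nonstandard elements.

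Second, and this is the main step, I need to relativise $\psi$ to express ``for every $a$, the set $\{y\mid y<a\}$ is finite.'' I would write $\chi:=\forall a\,\psi^{<a}$, where $\psi^{<a}$ is obtained from $\phi(x,y)$ by replacing $[x][y]$ with $[x][y]$ guarded by $x<a\land y<a\to\dots$. Concretely, I take $[x][y]\big((x<a\land y<a)\to\phi'(x,y)\big)$, where $\phi'$ relativises the inquisitive existentials in $\phi$ to the segment: $\existsi z(z<a\land z\neq y)$ and $\existsi u(u<a\land u\neq x)$. The implication restricts to subteams whose values lie below $a$. Since $\forall a$ assigns $a$ a constant value, the argument of Theorem \ref{thm1} carries over verbatim: a counterexample subteam is an injective, non-surjective function from the segment to itself. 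I expect this verification to be the only delicate point. I have to check that the classical antecedent $x<a\land y<a$, being flat, correctly cuts the team down, and that the relativised existentials compute domain and range within the segment. The edge case of an empty segment ($a=0$) is harmless because of the empty team property.

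Finally, for any first-order arithmetic sentence $\theta$, I claim $\theta$ is true in $\N$ iff $(\mathrm{PA}^-\land\chi)\to\theta$ is valid in \inqBTe. This uses the fact that negation and implication behave classically on sentences, together with Proposition \ref{prop:classical} for the classical parts. The map $\theta\mapsto(\mathrm{PA}^-\land\chi)\to\theta$ is computable, so true arithmetic many-one reduces to the validities of \inqBTe. By Tarski's theorem true arithmetic is not arithmetical, and the set of arithmetical sets is closed under such reductions, so the set of validities is not arithmetical either.
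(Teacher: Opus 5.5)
Your proposal is correct and follows essentially the same route as the paper, which conjoins the induction-free Peano axioms with a relativized finiteness sentence $\psi'=\forall z[x][y](\ldots)$ and reduces true arithmetic via the computable map $\theta\mapsto(\Phi_{PA}\land\psi'\to\theta)$. The differences are cosmetic. You place the segment guard in a separate outer antecedent, which by persistency is equivalent to conjoining it into the main antecedent as the paper does. You use $<$ instead of $\le$. You also spell out the categoricity argument and the closure of the arithmetical sets under computable reductions more explicitly than the paper does.
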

\begin{proof}We first show that the standard model of arithmetic ${\mathcal{N}}=(\mathbb{N},+,\times, \le,0,1)$  can be characterized up to isomorphisms by a single sentence of $\inqBTe$. In order to define such a sentence, we take the conjunction of the usual Peano axioms without the induction scheme $\Phi_{PA}$ and conjoin it with a sentence that says that every element of the model  has only finitely many predecessors. This can be expressed  by adapting the sentence  $\psi$ from \eqref{psi} in the following way: 
$$\psi'\;:=\; \forall z [x]  [y] \big ( x\le z \;\wedge\; y \le z \;\wedge \dep{x}{y} \,\wedge \dep{y}{x}\,\land\; \existsi u (u\le z\;\land\; u\neq y)\;\rightarrow\;   \existsi t  (t\le z\; \land\; t\neq x)  \big).$$
Notice  that for any classical sentence $\theta$ it now holds that
$$ \mathcal{N}\models \theta \iff (\Phi_{PA}\wedge \psi'\to \theta) \textrm{ is valid in \inqBTe}. $$
Thus, the set of arithmetic truths reduces to the set of \inqBTe-validities through the computable map $\theta\mapsto (\Phi_{PA}\land\psi'\to\theta)$. Since the former set has non-arithmetic complexity, so does the latter.
\end{proof}

\noindent
Let us now turn to the inquisitive team logic \inqBT. Without the operator $[x]$, it is no longer possible to express the finiteness/infiniteness of the model; indeed, as we mentioned in the beginning, with respect to sentences \inqBT\ is no more expressive than standard first-order logic \cite{yang2014}. However, if we start from the sentence $[x][y]\phi(x,y)$ that we used to express finiteness and remove the prefix $[x][y]$, we obtain an open formula $\phi(x,y)$ of \inqBT. We will now see that this formula allows us to give a negative answer to Open Question 1 in the introduction, concerning the expressive power of open formulas in~\inqBT. 

\begin{theorem}\label{thm2} There is an open formula  $\phi(x,y)$ of  $\inqBT$ in the empty vocabulary that cannot be expressed in first-order logic over models with vocabulary $\{R\}$, where $R$ is a binary relation.
 \end{theorem}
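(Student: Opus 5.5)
We take $\phi(x,y)$ to be the formula defined in \eqref{phi}, and argue by contradiction. Suppose there were a first-order sentence $\chi(R)$ over the vocabulary $\{R\}$ such that for every model $\M=(D,I)$ (empty vocabulary) and every nonempty team $X$ with domain $\{x,y\}$ we have $\M\models_X\phi(x,y)$ iff $(D,X[x,y])\models\chi(R)$. Since every binary relation on $D$ arises as $X[x,y]$ for a unique team $X$, this equivalence holds for all nonempty relations $R\subseteq D\times D$.

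The key observation is that the proof of Theorem~\ref{thm1} already identifies what $\phi$ says about the full team. For $X=\{g : g\text{ assigns values to }x,y\}$ we have $X[x,y]=D\times D$, and we showed that $\M\models_X\phi(x,y)$ holds iff $D$ is finite. Hence the hypothetical $\chi$ would satisfy
$$(D,D\times D)\models\chi(R)\iff D\text{ is finite}$$
for every nonempty set $D$.

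Now replace $R$ in $\chi$ by the tautological formula $(u=u)$; concretely, substitute every atom $R(t_1,t_2)$ by $t_1=t_1$. This yields a first-order sentence $\chi'$ in the empty vocabulary such that $D\models\chi'$ iff $(D,D\times D)\models\chi$ iff $D$ is finite. But finiteness is not first-order definable. The standard compactness argument, which we may recall briefly, runs as follows. The set $\{\chi'\}\cup\{\phi_n\mid n\in\N\}$, with $\phi_n$ the classical sentences from the proof of non-compactness, is finitely satisfiable by large finite sets. By compactness of first-order logic it then has a model, which must be infinite and yet satisfy $\chi'$. This is a contradiction.

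No step here is a real obstacle. The only point needing care is the bookkeeping: the correspondence between teams and relations must include the full relation $D\times D$, and the case of the empty team must be excluded, which is harmless since the full team is nonempty. The substantive work is already in the proof of Theorem~\ref{thm1}, namely the fact that $[x][y]\phi$ evaluates $\phi$ exactly at the team encoding $D\times D$.
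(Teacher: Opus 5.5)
Your proof is correct. It coincides with the paper's argument up to the point where you obtain $(D,D\times D)\models\chi(R)\iff D$ is finite; only the final step differs.

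The paper finishes with an Ehrenfeucht--Fra\"iss\'e game. It takes $k$ to be the quantifier rank of the hypothetical translation, sets $K=\{1,\dots,k\}$, and shows that $(K,K^2)$ and $(\mathbb{N},\mathbb{N}^2)$ agree on all sentences of rank at most $k$. Duplicator only needs to preserve equalities, since $R$ is total in both structures.

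You instead eliminate $R$ syntactically, replacing each atom $R(t_1,t_2)$ by a tautology. This is sound because $R$ is interpreted as the full relation. You then invoke the classical non-definability of finiteness via compactness.

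Your route is more modular: it reduces the claim to a textbook fact and needs no new combinatorics. The paper's route is self-contained and avoids compactness. More importantly, it is the template reused in Section~\ref{sec:inqbq} for \inqBQ. There the encoded structures $(K^2,K,\pi_1,\pi_2)$ carry nontrivial projection functions, so there is no atom that can simply be replaced by a tautology. Adapting your approach would require a genuine interpretation of the two-sorted structure in the bare set (worlds as pairs of individuals), not a substitution.
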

\begin{proof}
Define $\phi(x,y)$ as in \ref{phi} above. 
We know from the proof of Theorem \ref{thm1} that, for any model $\M=(D,I)$, if $X_D$ is the maximal team over $D$ defined on the variables $x$ and $y$, we have 
\begin{equation}\label{eqn2}
\M\models _{X_D} \phi(x,y)\iff D\text{ is finite}.
\end{equation}
Assume, for a contradiction, that there exists a first-order sentence $\phi^*(R)$, in the vocabulary comprising only the relation symbol $R$, satisfying 
\begin{equation}\label{eqn3}
 \M \models _X \phi(x,y) \iff (\M,X[x,y])\models \phi^* (R),  
 \end{equation}
for all models $\M$ and teams $X$. 
In particular, for any model $\M$ with domain $D$, if we instantiate $X$ to the maximal team $X_D$ for the variables $x,y$, then given that $X_D[x,y]=D^2$, from (\ref{eqn2}) and (\ref{eqn3}) we obtain:
\begin{equation}\label{eqn4}
(\M,D^2)\models \phi^* (R)\iff D\text{ is finite}.
 \end{equation}
Now let $k$ be the quantifier-rank of $\phi^*(R)$ and let $K=\{1,2,\ldots,k\}$. We can show by a simple Ehrenfeucht-Fra\"iss\'e game argument that for all first-order sentences $\theta$ of quantifier-rank up to $k$ we have:
\begin{equation}\label{equiv}
(K,K^2) \models \theta \iff (\mathbb{N},\mathbb{N}^2)\models \theta.
\end{equation}
Indeed, a winning strategy for the Duplicator in the $k$-round game over the models $(K,K^2)$ and $(\mathbb{N},\mathbb{N}^2)$ consists in maintaining the condition that 
$$ a_i=a_j \iff b_i=b_j, $$ 
for all elements $a_1,\ldots,a_h \in K$ and $b_1,\ldots,b_h \in \mathbb{N}$ chosen at any given stage in the game. This condition can be maintained for all $h\le k$---and, so, until the end of the game---since the cardinality of both models is at least $k$, and so at any stage in the game, Duplicator can always find a fresh element if needed.

Since the quantifier-rank of $\phi^*(R)$ is $k$, the equivalence  \eqref{equiv} holds for it.
But this is a contradiction, since (\ref{eqn4}) implies that $(K,K^2)\models\phi^*(R)$ but $(\mathbb{N},\mathbb{N}^2)\not\models\phi^*(R)$.
\end{proof}

To conclude our investigation of $\inqBT$, we examine its expressive power over finite structures. We show that $\inqBT$ can express properties of finite structures and teams that cannot be expressed in first-order logic. This is established by constructing an $\inqBT$ formula whose model checking problem is provably harder than that of any first-order formula.

Let $\threeSAT$ (respectively, $\threeUNSAT$) denote the set of satisfiable (respectively, unsatisfiable) Boolean formulas in conjunctive normal form with clauses of size three. It is known that $\threeUNSAT$ is $\mathrm{coNP}$-complete, and that the model checking problem of any first-order formula has a strictly lower computational complexity (see, e.g., \cite{NI99}).
\begin{theorem}
There exists a formula $\varphi$ of $\inqBT$ such that the problem of deciding whether a given finite structure and  a team satisfies $\varphi$ is $\mathrm{coNP}$-complete.
\end{theorem}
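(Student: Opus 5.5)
I would prove the two halves separately: a coNP upper bound for model checking every \inqBT\ formula, and coNP-hardness for one specific formula $\varphi$, via a reduction from $\threeUNSAT$.

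\textbf{Upper bound.} Fix a formula $\varphi$ of \inqBT\ and consider the structure $\M$ and team $X$ as the input, with $X$ listed as a relation $X[\vec x]$. I will show by induction on $\varphi$ that both support and its failure can be decided in polynomial time with a bounded number of alternations of nondeterminism. Atoms and $\bot$ are checked directly. The operators $\land$ and $\lori$ require two recursive calls. The quantifiers $\forall x$ and $\existsi x$ range over the $|D|$ constant extensions $X[x\mapsto d]$, each of polynomial size. The implication $\psi\to\chi$ is the only source of guessing: it fails iff some subteam $Y\subseteq X$ supports $\psi$ but not $\chi$, and such a $Y$ has size at most $|X|$.

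For the fixed formula I actually use, I will check directly that this gives a coNP bound. The formula will have the form $\alpha\to\beta$, where support of $\alpha$ and non-support of $\beta$ are both polynomial-time checkable. The complement problem is then "guess $Y\subseteq X$, verify $Y\models\alpha$ and $Y\not\models\beta$", which is in NP, so the problem itself is in coNP.

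\textbf{Hardness.} I will reduce $\threeUNSAT$ to model checking of $\varphi$. Given a 3CNF formula $\theta$, I encode it as a finite structure $\M_\theta$ together with a team $X_\theta$, and arrange that subteams $Y$ satisfying a dependence-style antecedent correspond to truth assignments. Concretely, I plan to follow the shape of $\phi(x,y)$ from Theorem~\ref{thm1}:
\begin{itemize}
\item Let $X_\theta$ contain, for each variable $p$ of $\theta$ and each value $b\in\{0,1\}$, an assignment with $x=p$ and $y=b$.
\item A subteam $Y$ satisfying $\dep{x}{y}$ together with "every variable appears" picks out a valuation. The covering condition "every variable appears" can be expressed through the consequent, negatively, by the same $\existsi u(u\neq x)$ trick used in Theorem~\ref{thm1}, applied relative to a unary predicate $V$ marking the variables: $\existsi u(Vu\land u\neq x)$.
\item The condition "$Y$ satisfies every clause" should be flat given $Y$, so that it can be placed in the antecedent. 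A natural attempt encodes the clauses in the vocabulary: a ternary relation $C$ for the clauses and a predicate $N$ for polarity. The condition would then read: for all clauses $c$, some literal of $c$ is made true by $Y$.
\end{itemize}

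Stating this clause condition inside the team is the main obstacle. It is a property of the whole team $Y$, so it needs an inquisitive formulation. My first attempt is
\[
\forall c\,\big(\mathrm{Cl}(c)\to \existsi l\,(\text{$l$ is a literal of $c$} \land \text{the pair assigned by $l$ lies in }Y)\big).
\]
The clause "the pair assigned by $l$ lies in $Y$" is a membership test, which is not downward closed and therefore problematic. Because of this, I will instead move the clause condition to the consequent, written negatively. The resulting formula is
\[
\varphi:=\big(\dep{x}{y}\land\text{(non-covering detector)}\big)\to\big(\text{some clause is falsified by $Y$}\lori\existsi u(Vu\land u\neq x)\big).
\]
Here "the clause $c$ is falsified by $Y$" means that every assignment $g$ in $Y$ makes the literals of $c$ false. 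This is a universal condition over $Y$, hence flat. The witnessing clause can be chosen with $\existsi c$ applied to the classical formula $\forall$-literal constraints on $x,y$. With this form, $\M_\theta\models_{X_\theta}\varphi$ holds iff every valuation falsifies some clause, that is, iff $\theta\in\threeUNSAT$.

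Finally, I will check that the construction of $\M_\theta$ and $X_\theta$ is polynomial, and that support of the antecedent and failure of the consequent are polynomial-time checks for a given $Y$. This also yields the coNP upper bound for this particular $\varphi$.
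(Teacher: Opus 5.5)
Your reduction works once one piece is removed: delete the conjunct ``(non-covering detector)'' from the antecedent. You never define it. On its natural reading, a formula supported only by subteams that miss some variable, it would make the antecedent entail the disjunct $\existsi u(Vu\land u\neq x)$ of the consequent. Then $\varphi$ would be supported by every team and the reduction would collapse.

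Keep the antecedent as just $\dep{x}{y}$, and fix the encoding as follows. Take the domain $V\cup\mathrm{Cl}\cup\{+,-\}$, a ternary $C(c,p,e)$ meaning that the literal on $p$ with polarity $e$ occurs in clause $c$, and $X_\theta[x,y]=V\times\{+,-\}$. Then
\[
\varphi\;:=\;\dep{x}{y}\to\big(\existsi c\,(\mathrm{Cl}(c)\land\neg C(c,x,y))\lori\existsi u\,(V(u)\land u\neq x)\big)
\]
is correct:
\begin{itemize}
\item a functional subteam that misses some variable supports the right disjunct;
\item a functional subteam covering $V$ is the graph of a valuation $f$, so it fails the right disjunct and supports the left one iff some clause contains no literal made true by $f$.
\end{itemize}
Your direct upper bound is fine: guess $Y\subseteq X_\theta$, check $\dep{x}{y}$ pairwise, and refute each $\existsi$ by scanning $D$. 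The general polynomial-hierarchy paragraph is not needed.

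This is a genuinely different route from the paper's.
\begin{itemize}
\item \textbf{Paper.} The instance is put into the team. $X$ over $\{z,u,x,y\}$ lists the literal occurrences (clause, position, variable, polarity), and the vocabulary has only unary $C,V$. A subteam is a selection of literal occurrences, made consistent by $\dep{x}{y}$. The formula $\dep{x}{y}\to\existsi w(C(w)\land w\neq z)$ just says that some clause has no selected literal. Membership in coNP is obtained via a universal second-order rewriting.
\item \textbf{Yours.} The instance is put into the structure, and the team is a plain product $V\times\{+,-\}$. Subteams satisfying $\dep{x}{y}$ are then partial valuations, as in Theorem~\ref{thm1}, where such subteams are partial functions and $\existsi u(u\neq x)$ detects non-totality.
\end{itemize}
The paper's version gets a minimal formula (one inquisitive existential, unary vocabulary) that follows the known dependence-logic hardness construction. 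Yours costs a ternary relation and an extra inquisitive disjunct. In exchange, it shows that hardness persists even when the team carries no information about the clauses, and it gives a self-contained NP certificate for the complement.
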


\begin{proof}
We give a reduction from $\threeUNSAT$. 
 Let 
\[
\Theta := \bigwedge_i (l_{i_1}\vee l_{i_2}\vee l_{i_3})
\]
be a Boolean formula, where each $l_{i_j}$ is either a propositional symbol ($p_k$) or its negation ($\neg p_k$). 
We use an encoding of $\Theta$ as a finite structure and a team that is similar to the one used for showing the $\mathrm{NP}$-completeness of the model checking of certain simple quantifier-free formulas of  dependence logic \cite{kontinenj13}.

Define a team $X$ over the domain $\{z,u,x,y\}$ as follows.
Each clause $c_i=(l_{i_1}\vee l_{i_2}\vee l_{i_3})$ of $\Theta$
is represented by three assignments:

\begin{center}
\begin{tabular}{|c|c|c|c|} \hline
$z$ = clause & $u$ = position & $x$ = variable & $y$ = parity \\ \hline\hline
$i$ & a & $p_{i_1}$ & $\epsilon_{i_1}$ \\ \hline
$i$ & b & $p_{i_2}$ & $\epsilon_{i_2}$ \\ \hline
$i$ & c & $p_{i_3}$ & $\epsilon_{i_3}$ \\ \hline
\end{tabular}
\end{center}

Here $\epsilon_{i_j}=+$ if the literal $l_{i_j}$ is positive
and $\epsilon_{i_j}=-$ if it is negated.
Let $X$ be the union of all these assignments. The domain $M$ of the associated structure $\M_{\theta}$ consists exactly of the elements occurring as values of variables in $X$. We assume that the possible values of different variables in $X$ partition the domain of $M$ and we use unary relations $V$ and $C$ to distinguish the elements encoding variables and clauses of $\Theta$ from the rest of the elements of $M$.  

We use the following idea:  any subteam $Y\subseteq X$ that satisfies $\dep{x}{y}$ and includes, for each clause $c_i$ of $\Theta$, at least one assignment $s$ such that $s(z)=i$ 
encodes a (set of)  Boolean assignments satisfying $\Theta$. One of them can be defined as follows:
\[
f(p_i)=1 \;\Longleftrightarrow\;
\exists s\in Y \text{ with } s(x)=p_i \text{ and } s(y)=+.
\]
The dependence atom $\dep{x}{y}$ guarantees that $f$ is well-defined. Furthermore, it is easy to check that any satisfying assignment $f$ for $\Theta$ gives rise to a subset $Y\subseteq X$ that satisfies $\dep{x}{y}$ and includes at least one assignment for each clause.
Therefore, in order to express that $\Theta$ is  unsatisfiable, it suffices to express that any subset $Y\subseteq X$ satisfying $\dep{x}{y}$ necessarily excludes, for some $i$, all $s$ such that $s(z)=i$. This can be expressed by the following formula:
\[
\varphi \; :=\; 
(\dep{x}{y} \to 
\existsi w \bigl(C(w)\wedge w\not=z )).
\]
It is now straightforward to verify that indeed $\Theta$ is  unsatisfiable, if and only if, $ \M_{\theta}\models_X \varphi$. Furthermore, since the model $\M_{\theta}$ and the team $X$ can be constructed in polynomial time in the size of $\Theta$, the model checking problem $\M\models_Y \varphi$ is $\mathrm{coNP}$-hard.
Membership in $\mathrm{coNP}$ follows from the fact that $\varphi$ can be easily rewritten in second-order logic using universal quantification over relations assuming the team $X$ is available as an auxiliary relation $X[z,u,x,y]$. 
\end{proof}

\section{Extension to \inqBQ}
\label{sec:inqbq}
In this section we turn our attention to the standard system of inquisitive first-order logic, \inqBQ. We will show that a simple adaptation of the proof we gave above for \inqBT\ allows us to settle Open Question~3 about the expressive power of \inqBQ\ in the negative.

\paragraph{Basic notions.} The language of \inqBQ\ is the same as for \inqBT. However, \inqBQ\ is interpreted over \emph{first-order information models}, which are triples $M=(W,D,I)$, where $W$ is a non-empty set whose elements are called \emph{worlds}, $D$ is a non-empty set whose elements are called \emph{individuals}, and $I$ is a world-relative interpretation function that gives for each $w\in W$ a map $I_w$ assigning to each $n$-ary relation symbol $R$ an $n$-ary relation $I_w(R)$ over $D$, and to each $n$-ary function symbol $f$ an $n$-ary function $I_w(f)$ over $D$ (instead of $I_w(R)$ and $I_w(f)$ we also write $R_w$ and $f_w$). Thus, with each world $w$ we can associate a standard relational structure $\M_w=(D,I_w)$, and the model $M$ as a whole can be regarded as a family $(\M_w)_{w\in W}$ of relational structures  over a common domain. 

An \emph{information state} (or simply \emph{state}) in $M$ is a subset $s\subseteq W$. An assignment is a function $g$ from the set of variables into $D$.
Terms $t$ from the language are assigned a denotation $[t]_w^g$ relative to a world $w$ and an assignment $g$ inductively by letting $[x]_w^g=g(x)$ and $[f(t_1,\dots,t_n)]_w^g=f_w([t_1]_w^g,\dots,[t_n]_w^g)$. The semantics is given in terms of a relation of \emph{support} $M,s\models_g\phi$ that interprets a formula $\phi$ relative to a model $M$, a state $s$, and an assignment $g$. This relation is defined by the following clauses:
\begin{itemize}
\item $M,s\models_g R(t_1,\dots,t_n)\iff\forall w\in s:([t_1]_w^g,\dots,[t_n]_w^g)\in R_w$
\item $M,s\models_g (t_1=t_2)\iff\forall w\in s: [t_1]_w^g=[t_2]_w^g$\footnote{The clause we consider here is a special case: in general, the identity symbol can be treated in \inqBQ\ as a world-dependent congruence relation \cite{Ciardelli:23book}. Models where `=' is interpreted as meta-language identity are called \textsf{id}-models. For our purposes, we can restrict to this special class of models: if a formula $\phi$ does not have a first-order counterpart relative to a more restricted class of models, \emph{a fortiori} it does not have such a counterpart relative to a more general class of models.}
\item $M,s\models_g\bot\iff s=\emptyset$
\item $M,s\models_g\phi\land\psi\iff M,s\models_g\phi$ and $M,s\models_g\psi$
\item $M,s\models_g\phi\lori\psi\iff M,s\models_g\phi$ or $M,s\models_g\psi$
\item $M,s\models_g\phi\to\psi\iff\forall t\subseteq s: M,t\models_g\phi$ implies $M,t\models_g\psi$
\item $M,s\models_g\forall x\phi\iff\forall d\in D: M,s\models_{g[x\mapsto d]}\phi$
\item $M,s\models_g\existsi x\phi\iff\exists d\in D: M,s\models_{g[x\mapsto d]}\phi$
\end{itemize}
We write simply $M\models_g\phi$ in case $M,W\models_g\phi$. As usual, for sentences the assignment does not matter and we can drop reference to it.

Clearly, the semantics of \inqBQ\ is strictly parallel to that of \inqBT, where states now play the role of teams. As a consequence, these logics share many properties. In particular, support in \inqBQ\ is persistent (i.e., preserved under substates), and the empty state trivially supports every formula. Classical formulas are flat, i.e., they are supported at a state $s$ just in case they are supported at each singleton $\{w\}\subseteq s$. Moreover, for a classical formula support at a singleton $\{w\}$ coincides with truth at the relational structure $\M_w=(D,I_w)$ under Tarskian semantics. 

\paragraph{Relational encoding.} A first-order information model $M=(W,D,I)$ for a signature $\Sigma$ can be faithfully encoded as a relational model $M^*=(W,D,I^*)$ with two sorts, \texttt{w} for worlds and \texttt{e} for individuals, for a modified signature $\Sigma^*$ defined as follows:
\begin{itemize}
\item for every $n$-ary predicate symbol $P\in\Sigma$, $\Sigma^*$ contains a predicate symbol $P^*$ of arity $n+1$ where the first argument is of sort \texttt{w} and the remaining $n$ arguments are of sort \texttt{e};
\item for every $n$-ary function symbol $f\in\Sigma$, $\Sigma^*$ contains a function symbol $f^*$ of arity $n+1$ where the first argument is of sort \texttt{w} and the remaining $n$ arguments as well as the output are of sort \texttt{e}.
\end{itemize}
Given a first-order information model $M=(W,D,I)$ for the signature $\Sigma$, its encoding $M^*=(W,D,I^*)$ is the two-sorted model over the signature $\Sigma^*$ defined in the following way:
$$(w,\overline d)\in I^*(R^*)\iff  \overline d\in I_w(R),\qquad\qquad\qquad I^*(f^*)(w,\overline d)=I_w(f)(\overline d).$$
We are now in a position to formulate Open Question 3 from the introduction in a fully precise way.

\begin{itemize}
\item \textbf{Open Question 3.} Given a sentence $\phi$ of \inqBQ\ in a signature $\Sigma$, is there always a corresponding sentence $\phi^*$ of two-sorted predicate logic in the signature $\Sigma^*$ such that for any first-order information model $M$ we have $M\models\phi\iff M^*\models\phi^*$?
\end{itemize}
To get the intuitive idea, the table below includes some \inqBQ\ sentences and corresponding first-order sentences which are equivalent to them in the relevant sense. We consider a signature $\Sigma$ with a unary predicate $P$ and a constant symbol $a$, and use $w$ as a first-order variable of sort \texttt{w}.

\begin{center}
\begin{tabular}{ll}
\inqBQ\ sentence & first-order sentence\\
\hline
$P(a)$ & $\forall wP^*(w,a)$\\
$?P(a)$ & $\forall wP^*(w,a)\lor\forall w\neg P^*(w,a)$\\
$\existsi xP(x)$ & $\exists x\forall wP^*(w,x)$
\end{tabular}
\end{center}

\paragraph{Novel result.} 
Consider a signature $\Sigma$ containing two constant symbols, $a$ and $b$. A first-order information model for $\Sigma$ is a triple $M=(W,D,I)$ where relative to a world $w\in W$, the function $I$ assigns an extension for each constant, $a_w\in D$ and $b_w\in D$. Now to each information state $s\subseteq W$ we can associate a corresponding binary relation $R_s\subseteq D^2$ defined by $R_s=\{(a_w,b_w)\mid w\in s\}$. We say that a model $M=(W,D,I)$ is \emph{full} in case $R_W=D^2$. Now consider the following formula, obtained by taking the formula $\phi(x,y)$ defined in \ref{phi} and replacing the variables $x,y$ with the constants $a,b$:
\begin{equation}
\label{phi(a,b)}\phi(a,b)\;:=\;\big ( \dep{a}{b}\, \wedge \dep{b}{a}\land \existsi z (z\neq b)\; \rightarrow\;  \existsi u  (u\neq a)  \big).
\end{equation}
We prove that, in restriction to the class of full models, $\phi(a,b)$ defines finiteness of the domain.
\begin{proposition} If $M=(W,D,I)$ is a full model then $M\models\phi(a,b)\iff D$ is finite.
\end{proposition}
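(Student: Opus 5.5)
The argument mirrors the proof of Theorem \ref{thm1}, with information states playing the role of teams and the map $s\mapsto R_s$ replacing $Y\mapsto Y[x,y]$. Since $\phi(a,b)$ is a sentence, $M\models\phi(a,b)$ means $M,W\models\phi(a,b)$. By the implication clause, $M$ falsifies $\phi(a,b)$ iff some state $s\subseteq W$ supports the antecedent but not the consequent. So the plan is to compute, for an arbitrary state $s$, what support of each conjunct amounts to in terms of the relation $R_s$.

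\textbf{Step 1: value questions.} Show that $M,s\models\mu a$ iff $a_w$ is constant for $w\in s$. The formula $\mu a$ is $\forall x?(x=a)$, and $?(x=a)$ is supported iff all worlds of $s$ agree on whether $a_w=g(x)$; quantifying over all $d$ gives constancy. By persistency and the implication clause, $\dep{a}{b}=\mu a\to\mu b$ holds at $s$ iff for all $w,w'\in s$, $a_w=a_{w'}$ implies $b_w=b_{w'}$. That is exactly the condition that $R_s$ is a function. Symmetrically, $\dep{b}{a}$ holds at $s$ iff $R_s$ is injective.

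\textbf{Step 2: the existentials.} The atom $z\neq b$ abbreviates $(z=b)\to\bot$, and for classical formulas support is flat. Hence $M,s\models_{g[z\mapsto d]} z\neq b$ iff $b_w\neq d$ for all $w\in s$. So $\existsi z(z\neq b)$ holds at $s$ iff $\text{ran}(R_s)\neq D$, and $\existsi u(u\neq a)$ fails at $s$ iff $\text{dom}(R_s)=D$.

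\textbf{Step 3: both directions.} Combining Steps 1 and 2, $M$ falsifies $\phi(a,b)$ iff some $s\subseteq W$ has $R_s$ equal to an injective, non-surjective function with domain $D$. Such an $R_s$ can exist only if $D$ is infinite, which gives the direction: $D$ finite implies $M\models\phi(a,b)$. Conversely, suppose $D$ is infinite and take an injective non-surjective $f\colon D\to D$. Fullness means $R_W=D^2$, so for each $d\in D$ we can choose a world $w_d$ with $(a_{w_d},b_{w_d})=(d,f(d))$. Let $s=\{w_d\mid d\in D\}$; then $R_s=f$ (the graph of $f$), so $s$ is a counterexample and $M$ falsifies $\phi(a,b)$.

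\textbf{Expected obstacle.} The one genuine difference from the team setting is that distinct worlds may realize the same pair $(a_w,b_w)$. We must therefore check that every clause depends only on $R_s$, and not on the multiplicity of worlds realizing each pair. The computations in Steps 1 and 2 are all phrased via the values $(a_w,b_w)$ for $w\in s$, so this dependence only on $R_s$ does hold.
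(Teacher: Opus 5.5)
Your proof is correct and follows the same route as the paper: you characterize support of each conjunct at a state $s$ in terms of $R_s$ (function, injective, domain $D$, non-surjective range), and then use fullness to realize the graph of an injective non-surjective map as some $R_s$. You supply more detail than the paper, namely the value-question computation and an explicit counterexample state, but the argument is the same.
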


\begin{proof} We prove the equivalent claim that if $M$ is full, $M\not\models\phi(a,b)\iff D$ is infinite. We have that $M\not\models\phi(a,b)$ if there is an $s\subseteq W$ that supports the antecedent but not the consequent. As we discussed above, this state is associated with a relation $R_s=\{(a_w,b_w)\mid w\in s\}$. Spelling out the semantics, we find:
\begin{multicols}{2}
\begin{itemize}
\item $M,s\models{\dep{a}{b}}\iff R_s$ is a function
\item $M,s\models{\dep{b}{a}}\iff R_s$ is injective
\item $M,s\not\models\existsi u(u\neq a)\iff \text{dom}(R_s)=D$
\item $M,s\models\existsi z(z\neq b)\iff \text{ran}(R_s)\neq D$
\end{itemize}
\end{multicols}
\noindent
So, a state $s$ falsifies the conditional $\phi(a,b)$ just in case the associated relation $R_s$ is an injective function on $D$ which is not surjective. In a full model, every binary relation on $D$ is represented as $R_s$ for some state $s\subseteq W$. So, in a full model, a state falsifying the conditional exists iff there exists an injective function defined on $D$ which is not surjective, i.e., iff $D$ is infinite.
\end{proof}

\noindent
We can use this result to give a negative answer to Open Question 3.

\begin{theorem} For some sentence $\phi$ of \inqBQ\ in a signature $\Sigma$, there is no sentence $\phi^*$ of two-sorted first-order logic in the signature $\Sigma^*$ such that for every model $M$ of \inqBQ\ we have $M\models\phi\iff M^*\models\phi^*$.
\end{theorem}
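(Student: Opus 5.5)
We take $\phi:=\phi(a,b)$ from (\ref{phi(a,b)}) in the signature $\Sigma=\{a,b\}$, and argue by contradiction. So $\Sigma^*=\{a^*,b^*\}$, where $a^*,b^*$ are unary function symbols from sort \texttt{w} to sort \texttt{e}. Suppose $\phi^*$ is a two-sorted first-order sentence with $M\models\phi(a,b)\iff M^*\models\phi^*$ for every model $M$, and let $k$ be its quantifier rank.

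For a nonempty set $D$, we use the canonical full model $M_D=(W_D,D,I)$ with $W_D=D^2$, $a_{(d,e)}=d$ and $b_{(d,e)}=e$. Then $R_{W_D}=D^2$, so $M_D$ is full. By the preceding Proposition, $M_D\models\phi(a,b)$ iff $D$ is finite, hence $M_D^*\models\phi^*$ iff $D$ is finite. The encoding $M_D^*$ is the two-sorted structure with sorts $D^2$ and $D$, where $a^*,b^*$ are the two projections. This structure is first-order interpretable in the pure set $D$: world-elements are coded as pairs of individuals and the projections become trivial. This suggests a reduction: translate $\phi^*$ into a one-sorted sentence $\theta$ in the empty vocabulary such that $M_D^*\models\phi^*\iff D\models\theta$. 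To do so, replace each world-quantifier $\forall w$ by $\forall w_1\forall w_2$, replace the term $a^*(w)$ by $w_1$ and $b^*(w)$ by $w_2$, and replace equality $w=w'$ between world variables by $w_1=w_1'\wedge w_2=w_2'$. After first unnesting terms so that atoms are flat, a routine induction shows the translation is correct. Then $\theta$ would define finiteness in the empty vocabulary, which is impossible: by the Ehrenfeucht--Fra\"iss\'e argument of Theorem \ref{thm2}, sets of size at least the quantifier rank of $\theta$ are indistinguishable by $\theta$. Compare $K=\{1,\dots,2k\}$ with $\mathbb{N}$.

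Alternatively, we can play the two-sorted Ehrenfeucht--Fra\"iss\'e game directly between $M_K^*$ and $M_{\mathbb{N}}^*$. Duplicator maintains a partial bijection on individuals, consisting of the chosen individuals together with all coordinates of chosen worlds, that preserves equality. When a world $(d,e)$ is picked, Duplicator responds with the pair of the images of $d$ and $e$, extending the bijection with fresh elements as needed. Each round adds at most two new individuals, so $|K|\geq 2k$ always leaves a fresh element available. The resulting correspondence preserves all atomic formulas: equalities of both sorts and atoms of the form $a^*(w)=x$ or $b^*(w)=x$, including nested ones such as $a^*(w)=b^*(w')$.

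The main obstacle is the careful handling of atomic formulas in the two-sorted setting, in particular equality between world terms and the function terms. Both are handled because worlds in $M_D$ are determined by their pair of coordinates. Once that is checked, the contradiction follows: $M_K^*\models\phi^*$ since $K$ is finite, while $M_{\mathbb{N}}^*\not\models\phi^*$, even though the two structures agree on all sentences of rank $k$.
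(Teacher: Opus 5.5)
Your proposal is correct, and it contains two arguments.

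Your second argument, the direct two-sorted Ehrenfeucht--Fra\"iss\'e game between $M_K^*$ and $M_\N^*$, is essentially the paper's proof. The paper uses the same formula $\phi(a,b)$ and the same full models $M_K=(K^2,K,I_K)$ and $M_\N=(\N^2,\N,I_\N)$, whose encodings are the sets of pairs with the two projections. It sketches the same Duplicator strategy: match the finitely many numbers touched by the chosen worlds and individuals with numbers in $K$. Your bookkeeping is tighter in two ways:
\begin{itemize}
\item Each round brings in at most two new individuals, so $|K|=2k$ suffices. The paper takes $|K|=q^2$, which is at least $2q$ only when $q\ge 2$; this is harmless, since one may assume $q\ge 2$.
\item You make explicit the invariant that the paper only sketches: a partial bijection on individuals that contains the coordinates of all chosen worlds. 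This is what guarantees that equality between worlds and atoms such as $a^*(w)=b^*(w')$ are preserved.
\end{itemize}

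Your first argument is a genuinely different route. Instead of designing a two-sorted game, you observe that $M_D^*$ is uniformly first-order interpretable in the bare set $D$: worlds become pairs, and $a^*,b^*$ become projections. So $\phi^*$ would translate into a sentence $\theta$ of the empty vocabulary, of rank at most $2k$, defining finiteness, which contradicts the classical undefinability of finiteness.

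This route is more modular, and it explains why the counterexample works. The encoded canonical full models carry no more first-order information than their domain, so any cardinality property that \inqBQ\ expresses on full models transfers immediately. The paper's direct game, by contrast, is self-contained and avoids verifying a translation lemma.

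One small remark on your first argument: the unnesting step is superfluous. No symbol of $\Sigma^*$ has output sort \texttt{w}, so every term already has depth at most one.
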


\begin{proof} We can take $\phi$ to be the formula $\phi(a,b)$ defined in \ref{phi(a,b)}, in the signature $\Sigma$ including only the constants $a,b$. Towards a contradiction, suppose there is a formula $\phi^*$ of two-sorted first-order logic in the signature $\Sigma^*$ that is equivalent to $\phi$ in the relevant sense. 

Let $q$ be the quantifier rank of $\phi^*$, let $k=q^2$, and let $K=\{1,\dots,k\}$. Now consider the two models $M_K=(K^2,K,I_K)$ and $M_\N=(\N^2,\N,I_\N)$, where both $I_K$ and $I_\N$ assign to a world $(n,m)$ the extensions given by $a_{(n,m)}=n$ and $b_{(n,m)}=m$. Clearly, these models are full, so by the previous proposition we have $M_K\models\phi$ but $M_\N\not\models\phi$. By our assumption on $\phi^*$, it follows that $M_K^*\models\phi^*$ and $M_\N^*\not\models\phi^*$. If we make explicit the translated structures $M_K^*$ and $M_\N^*$, we find that they are:
$$M_K^*=(K^2,K,\pi_1^K,\pi_2^K)\qquad\qquad M_\N^*=(\N^2,\N,\pi_1^{\N},\pi_2^{\N}),$$
where $\pi_1^K$ and $\pi_2^K$ are the projection functions from $K^2$ to $K$, and similarly for $\pi_1^{\N},\pi_2^{\N}$. A standard argument shows that these structures are equivalent in a $q$-move Ehrenfeucht-Fra\"iss\'e game: the basic idea is that within $q$ moves, Spoiler can only pick up to $q$ distinct elements of $\N^2\cup \N$, corresponding to at most $q^2=k$ distinct natural numbers; Duplicator can then identify these numbers with corresponding numbers from $\{1,\dots,k\}$ and make the counterpart of Spoiler's moves under this identification. So, $M_K^*$ and $M_\N^*$ satisfy the same formulas of quantifier rank up to $q$. As a consequence, they must agree on $\phi^*$, contrary to our assumption.
\end{proof}

\noindent
We have thus established that some sentences of \inqBQ\ express genuine second-order properties of models: in particular, the formula $\phi(a,b)$ defined above is an example. 

It is interesting to comment on the syntactic form of this example. Recall from the introduction that, in contrast to our result, two broad fragments of \inqBQ\ allow for a translation to two-sorted first-order logic: the classical antecedent (clant) fragment \cite{Grilletti:21,Ciardelli:23book}, where antecedents are restricted to classical formulas, and the restricted existential (rex) fragment \cite{CiardelliGrilletti:22}, where the inquisitive existential quantifier is restricted to occur only in conditional antecedents. This implies that any example of an \inqBQ\ sentence expressing a non first-order property of models must have two features: it must involve (i) an inquisitive antecedent and (ii) an inquisitive existential which is not in a conditional antecedent. Our example sentence $\phi(a,b)$ respects both conditions in a relatively minimal way: it is a conditional with an inquisitive antecedent and an inquisitive existential in the consequent.

\section{Conclusion}\label{conc}
We have shown that there is a sentence of  $\inqBTe$ whose models are exactly the finite structures and that, therefore,  $\inqBTe$ fails  compactness and cannot be axiomatized. We also showed that 
 there are open formulas of $\inqBT$ which express non first-order properties of teams, and sentences of \inqBQ\ that express non first-order properties of models, viewed as two-sorted relational structures. Some important problems in this area remain open, including the problem of characterizing the exact expressive power of the logics $\inqBT$ and $\inqBQ$ in relation to fragments of second-order logic, and the problem of whether these logics satisfy entailment compactness and admit a recursive axiomatization.\footnote{In fact, after we completed this paper, we discovered that the ideas presented here can be used to settle these long-standing problems in the negative. A paper on this is in preparation; a first draft is available at  \url{https://arxiv.org/abs/2603.20845}.}
\subsection*{Acknowledgements}
The second author's research has been supported by the European Research Council under the Horizon Europe research and innovation programme (research project InqML, grant agreement no.\ 101116774).
\bibliographystyle{eptcs}
\bibliography{bibb}

\end{document}